\theoremstyle{plain}
\theoremstyle{plain}
\newtheorem*{lemma*}{Lemma}
\newtheorem{lemma}[subsection]{Lemma}
\newtheorem*{theorem*}{Theorem}
\newtheorem{theorem}[subsection]{Theorem}
\newtheorem*{proposition*}{Proposition}
\newtheorem*{corollary*}{Corollary}
\theoremstyle{definition}
\newtheorem*{definition*}{Definition}
\newtheorem{definition}[subsection]{Definition}
\newtheorem*{example*}{Example}
                                    \theoremstyle{remark}
\newtheorem*{remark*}{Remark}
\newcommand{\R}{{\mathbb R}}
\newcommand{\deps}{\frac{d}{d\epsilon}|_{0}}
\begin{document}

\title{ A homotopy  for a complex of free Lie algebras}

 \author{ Mich{\`e}le Vergne}
\date{}
\maketitle

\address{Mich{\`e}le Vergne, Institut de Math\'ematiques de Jussieu, Th{\'e}orie des
Groupes, Case 7012, 2 Place Jussieu, 75251 Paris Cedex 05, France}

\address{email: vergne@math.jussieu.fr}

%AMS Classification Number: 16 S
%
%Keywords: Free  Lie Algebras.

\section*{Abstract}
Using the Guichardet construction, we compute the cohomology groups of a complex of free Lie algebras introduced  by Alekseev and Torossian.

\section*{Introduction}

In their study of the relation between the KV-conjecture and  Drinfeld's associators, Alekseev and Torossian \cite{AT} studied  the Eilenberg-MacLane   differential $\delta_A:L_n\to L_{n+1}$  where $L_n$ is the free Lie algebra in $n$ variables, and computed the cohomology groups of $\delta_A$ in dimensions $1,2$. Following the construction of Guichardet \cite{gui1} (see also \cite{gui2}), we remark that the complex $\delta_A$ is acyclic, except in dimensions $1,2$, where the cohomology is of dimension $1$.
We also identify the cohomology groups of a similar complex
 $\delta_A: T_n\to T_{n+1}$ where $T_n$ is the free associative algebra in $n$ variables: the cohomology is of dimension $1$ in any degree. The Guichardet construction provides an explicit homotopy.

 Alekseev and Torossian used the computations in dimension $2$  to deduce the existence of a solution to the KV problem from the existence of an associator.  A simple by-product of their computation is the existence and the uniqueness  of the Campbell-Hausdorff formula.
 We do not have any other application of the computations of higher cohomologies.

In this note, we start with a review  of the construction of Guichardet. Then we adapt it  to free associative algebras and free Lie algebras.

I am thankful to the referee for his careful reading.

\section{The Guichardet  construction.}
Let $V$ be a finite dimensional real vector space.
Let $F^n$ be the space of polynomial functions
$f$ on  $V\oplus V\oplus\cdots\oplus  V$.
An element $f$ of $F^n$ is written as $f(v_1,v_2,\ldots, v_n)$.

Define
$$(\delta_nf)(v_1, \ldots , v_{n+1})=\sum_{i=1}^n(-1)^{i}f(v_1, v_2, \ldots, v_{i-1},{\hat v_i}, v_{i+1},\ldots, v_n).$$

For example:
$$(\delta_1 f)(v_1,v_2)=-f(v_2)+f(v_1)$$
$$(\delta_2 f)(v_1,v_2,v_3)=-f(v_2,v_3)+f(v_1,v_3)-f(v_1,v_2).$$

\bigskip

We define $F^0=\R$, and embed $F^0\to F^1$ as the constant functions.

The complex $0\to F^0\to F^1\to \cdots$ is acyclic except in degree $0$. Indeed $s:F^n\to F^{n-1}$  given by

\begin{equation}\label{ret}
(sf)(v_1,v_2,\ldots, v_{n-1})=f(0,v_1,v_2,\ldots, v_{n-1})
\end{equation} satisfies
${\rm Id}:=s\delta+\delta s$.

Now the additive group $V$ operates on $F^n$ by translations:
if $\alpha\in V$, we write $$(\tau(\alpha)f)(v_1,\ldots, v_n)=f(v_1-\alpha,\ldots, v_n-\alpha).$$
The differential $\delta$ commutes with  translations, so that it induces a differential $\delta_A$ on  the subspace of  translation invariant functions.

It is well known that
the cohomology of the complex  $\delta_A$ is isomorphic  with $\Lambda^{n-1}V^*$.
Here we recall Guichardet's explicit construction of the isomorphism
as we will adapt it to the "universal case" considered by Alekseev-Torossian.

 \bigskip

Let $\Omega^{n-1}$  be the space of  differential forms of exterior degree  $n-1$ on $V$, with polynomial coefficients, equipped  with the de Rham differential.

Consider the simplex $S:=S_{v_1,v_2,\ldots, v_n}$ in $V$ with vertices $(v_1,v_2,\ldots, v_n)$.
Thus the map $\Omega^{n-1} \to F^n$ defined by $\omega\to \int_{S} \omega$ induces a map from $\Omega^{n-1}$ to $F^n$. This map commutes with the differentials (as follows from Stokes formula) and with the natural action by translations.

     Conversely,  associate to  $f\in F^n$ a differential form $\omega(f)$  of degree $(n-1)$ by setting for $v_1,v_2,\ldots, v_{n-1}$ vectors in $V$, identified with tangent vectors at $v\in V$:
    $$\langle \omega(f)(v), v_1\wedge v_2\wedge \cdots\wedge v_{n-1}\rangle=
    \sum_{\sigma\in \Sigma_{n-1}}\epsilon(\sigma)\deps f(v, v+\epsilon_1 v_{\sigma(1)},\ldots, v+\epsilon_{n-1} v_{\sigma(n-1)}).$$

Here if $\phi$ is a  polynomial function of $\epsilon_1,\ldots,\epsilon_{n-1}$,  we employ the notation $\deps \phi(\epsilon)$ for
the coefficient of $\epsilon_1\cdots\epsilon_{n-1}$
in $\phi$.

    The map $\omega$ commutes with the differential, and  with the action of $V$ by translations.
    Thus the map $P_n:F^n\to F^n$ defined by
    $$P_n(f)=\int_{S}\omega(f)$$ produces a map from $F^n\to F^n$, commuting with the action of $V$.
    This map is the identity on $F^1$.

    Let us give the formulae for $P_n$ so that we see that the map $P_n$ is ``universal".

Given $v:=(v_1,v_2,\ldots, v_n)\in V$,
 consider the map $p_v: \R^{n-1}\to V$ given by
 $$p_v(t_1,t_2,\ldots, t_{n-1})=v_1+t_1(v_2-v_1)+\cdots+t_{n-1}(v_n-v_1).$$

  This  map sends the standard simplex $\Delta_{n-1}$ defined by
  $$t_i\geq 0, \sum_{i=1}^{n-1}t_i\leq 1$$
  to the simplex $S$ in $V$ with vertices $v_1,v_2,\ldots, v_n$.

Let us consider the form $$p_v^*\omega(f)=f(t,v)dt_1\wedge \cdots \wedge dt_{n-1}.$$

The map
$P_n$ is given by
$$(P_nf)(v)=\int_{\Delta_{n-1}} f(t,v)dt$$
where $f(t,v)$ is the element of $F_n$ depending on $t$ described as follows.

\begin{lemma}

Let $$v(t)=v_1+t_1(v_2-v_1)+\cdots+t_{n-1} (v_{n}-v_1).$$
Define

\begin{equation}\label{eqf}
f(t,v_1,v_2,\ldots, v_{n})=\deps \sum_{\sigma\in \Sigma[2,\ldots, n]}\epsilon(\sigma) f(v(t),v(t)+\epsilon_1 (v_{\sigma(2)}-v_1),\ldots, v(t)+\epsilon_{n-1}(v_{\sigma(n)}-v_1)).
\end{equation}

Here $t=(t_1,t_2,\ldots,t_{n-1})$ and
$\Sigma([2,\ldots, n])$ is the group of permutations of the set with $(n-1)$ elements  $[2,\ldots, n]$.

Then we have the formula

$$(P_nf)(v_1,v_2,\ldots,v_n)=\int_{\Delta_{n-1}} f(t,v)dt_1dt_2\cdots dt_{n-1}.$$

\end{lemma}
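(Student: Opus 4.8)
The plan is to make the identity $p_v^{*}\omega(f)=f(t,v)\,dt_1\wedge\cdots\wedge dt_{n-1}$ used in the text completely explicit; that is, to compute the coefficient of the pulled-back top-degree form $p_v^{*}\omega(f)$ and to check that it is precisely the expression \eqref{eqf}. Granting this, the asserted formula for $P_nf$ is immediate, since $\int_{S}\omega(f)=\int_{\Delta_{n-1}}p_v^{*}\omega(f)$ is exactly the definition of integration of $\omega(f)$ over the simplex $S$ as parametrized by $p_v$.

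To compute the pullback I use that $p_v$ is affine: at every $t=(t_1,\ldots,t_{n-1})\in\R^{n-1}$ its differential sends $\partial/\partial t_i$ to the constant vector $v_{i+1}-v_1\in V$, viewed as a tangent vector at $v(t)=p_v(t)$. Hence
\[
(p_v^{*}\omega(f))(t)=\big\langle\,\omega(f)(v(t)),\ (v_2-v_1)\wedge(v_3-v_1)\wedge\cdots\wedge(v_n-v_1)\,\big\rangle\ dt_1\wedge\cdots\wedge dt_{n-1}.
\]
Next I substitute the defining formula for $\omega(f)$, evaluated at the base point $v(t)$ and at the tangent vectors $w_i:=v_{i+1}-v_1$, $i=1,\ldots,n-1$; this rewrites the bracket as $\sum_{\sigma\in\Sigma_{n-1}}\epsilon(\sigma)\,\deps f\big(v(t),\,v(t)+\epsilon_1 w_{\sigma(1)},\ldots,\,v(t)+\epsilon_{n-1}w_{\sigma(n-1)}\big)$. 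Finally I relabel the index set $\{1,\ldots,n-1\}$ by the shift $i\mapsto i+1$: this is a sign-preserving bijection onto $\{2,\ldots,n\}$, it identifies $\Sigma_{n-1}$ with $\Sigma[2,\ldots,n]$, and it carries $w_{\sigma(i)}=v_{\sigma(i)+1}-v_1$ to $v_{\sigma'(i+1)}-v_1$ for the corresponding $\sigma'\in\Sigma[2,\ldots,n]$. Under this relabeling the sum becomes exactly $f(t,v_1,\ldots,v_n)$ as in \eqref{eqf}, which proves the lemma.

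The computation is essentially bookkeeping, and the only point that truly needs care is keeping the conventions consistent: the identification of $V$ with each of its tangent spaces (so that $v(t)+\epsilon_i w_{\sigma(i)}$ is a point of $V$ that can be fed into $f$), the orientation of $\Delta_{n-1}$ that makes $dt_1\wedge\cdots\wedge dt_{n-1}$ positive, and the common ordering of the vertices $v_1,\ldots,v_n$ used in $p_v$ and in the pairing $\langle\omega(f),\cdot\rangle$. With these fixed as in the construction above, the shift $i\mapsto i+1$ introduces no extra sign and the two sides agree term by term; as a sanity check, for $n=1$ both reduce to $f(v_1)$, consistent with $P_1$ being the identity on $F^1$.
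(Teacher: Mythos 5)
Your proof is correct and follows exactly the route the paper intends: the paper states the identity $p_v^{*}\omega(f)=f(t,v)\,dt_1\wedge\cdots\wedge dt_{n-1}$ without proof, and your computation (pulling back along the affine map $p_v$, whose differential sends $\partial/\partial t_i$ to $v_{i+1}-v_1$, then relabeling $\Sigma_{n-1}$ by the sign-preserving shift onto $\Sigma[2,\ldots,n]$) is precisely the verification that was left implicit. Nothing is missing.
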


   Let $H:={\rm Id}-P$.
    Using the injectivity of the vector spaces $F^n$ in the category of  $V$-modules,  it is standard, and we will review the procedure below,  to  produce a homotopy
    $$G: F^{n}\to F^{n-1}$$ commuting with the action of $V$ by translations and such that:$$H=G\delta+\delta G.$$

We first use the following injectivity lemma.
\begin{lemma}\label{injective}
Let $A,B$ be two real vector spaces provided with a structure of $V$-modules.
Let $u: A\to F^n$  be a  $V$-module map from $A$ to $F^n$. Let $v: A\to B$ be an injective map of $V$-modules,
Then there exists a map $w: B\to F^n$ of $V$-modules
 extending $u$.
\end{lemma}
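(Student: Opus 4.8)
The statement to prove is an injective-module extension lemma: given $V$-modules $A, B$, an injection $v\colon A\hookrightarrow B$, and a $V$-map $u\colon A\to F^n$, extend $u$ to a $V$-map $w\colon B\to F^n$. My plan is to show that each $F^n$ is an injective object in the category of $V$-modules (where $V$ is the additive group of a finite-dimensional real vector space), which is exactly what the extension property asserts.

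\medskip

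The key observation is a change-of-variables that trivializes the $V$-action on $F^n$. Recall $F^n$ consists of polynomial functions $f(v_1,\dots,v_n)$ on $V^{\oplus n}$, with $V$ acting by the diagonal translation $(\tau(\alpha)f)(v_1,\dots,v_n)=f(v_1-\alpha,\dots,v_n-\alpha)$. Introducing the coordinates $w_1=v_1$ and $w_i = v_i - v_1$ for $i\ge 2$, the translation action becomes $w_1\mapsto w_1-\alpha$ while $w_2,\dots,w_n$ are fixed. Hence $F^n\cong \mathrm{Pol}(V)\otimes \mathrm{Pol}(V^{\oplus(n-1)})$ as a $V$-module, where $V$ acts only on the first factor $\mathrm{Pol}(V)$ by translation and trivially on the second. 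So it suffices to prove that $\mathrm{Pol}(V)$, with the translation action of $V$, is an injective $V$-module (tensoring an injective module with any vector space, viewed with trivial action, over $\R$ keeps it injective — or one argues the extension directly, coefficient by coefficient, in the second tensor factor).

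\medskip

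To see that $\mathrm{Pol}(V)$ is injective as a $V$-module: a $V$-module is the same thing as a module over the group algebra $\R[V]$, and because $V$ is a torsion-free abelian group one can also think in terms of the symmetric algebra. Concretely, $\mathrm{Pol}(V)$ with the translation action is the coinduced (= induced, since we can dualize) module $\mathrm{Coind}_1^V \R = \mathrm{Hom}_{\R}(\R[V],\R)$-like object at the level of polynomials — more precisely, evaluation-at-$0$ gives a $V$-equivariant identification of $\mathrm{Pol}(V)$ with the space of functions "coinduced" from the trivial module, so Frobenius reciprocity yields, for any $V$-module $A$, a natural isomorphism $\mathrm{Hom}_V(A, \mathrm{Pol}(V)) \cong \mathrm{Hom}_\R(A, \R)$, $\phi \mapsto (a\mapsto \phi(a)(0))$. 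Since $\mathrm{Hom}_\R(-,\R)$ is exact (every $\R$-linear surjection/injection splits), the functor $\mathrm{Hom}_V(-,\mathrm{Pol}(V))$ is exact, i.e. $\mathrm{Pol}(V)$ is injective. Given the injection $v\colon A\hookrightarrow B$, I extend the linear functional $a\mapsto u(a)(0)$ on $A$ to a linear functional $\ell$ on $B$ (choosing a linear complement, using finite-dimensionality nowhere — just vector-space theory), and then define $w(b)\in \mathrm{Pol}(V)$ by $w(b)(x) = \ell(\tau(-x)\,b)$ — wait, more carefully $w(b)(x) = \ell(\tau(x)^{-1}$ applied appropriately$)$; the formula is forced by equivariance and one checks $w$ is a $V$-map extending $u$. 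Translating back through the coordinate change gives $w\colon B\to F^n$.

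\medskip

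The main obstacle — or rather the main thing to get right — is the bookkeeping of the coinduction isomorphism at the polynomial (rather than full function) level, and verifying that the extended map genuinely lands in polynomials and is $V$-equivariant; once the coordinate change $v_i\mapsto v_i-v_1$ is made and one recognizes $\mathrm{Pol}(V)$ as coinduced from the trivial module, the injectivity is formal. An alternative, more hands-on route avoids naming "coinduction": pick a vector-space splitting $B = v(A)\oplus C$, define $\ell\colon B\to\R$ extending $a\mapsto u(a)(0)$ arbitrarily on $C$, and set $w(b)$ to be the polynomial $x\mapsto \ell(\tau(x)b)$ reinterpreted — the polynomiality of $w(b)$ holds because the $V$-action on $B$, restricted to the finite-dimensional subspace generated by $b$ under $V$, is polynomial (translation actions on submodules of $F^n$ are), so $x\mapsto \tau(x)b$ is a polynomial map into that finite-dimensional space and $\ell$ composed with it is a polynomial. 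I would present the coordinate-change reduction first, then the hands-on splitting argument, as it needs no categorical language beyond what the paper already uses.
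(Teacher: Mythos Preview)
Your hands-on alternative at the end is exactly what the paper does, only the paper is more direct: it picks a linear retraction $s\colon B\to A$ with $sv=\mathrm{Id}$ and simply writes
\[
w(b)(v_1,\ldots,v_n)=u\bigl(s\,\tau(-v_1)b\bigr)(0,\,v_2-v_1,\ldots,\,v_n-v_1),
\]
then checks $V$-equivariance by a two-line computation. Your coordinate change $v_i\mapsto v_i-v_1$ is already built into this formula, and choosing a splitting $B=v(A)\oplus C$ is equivalent to choosing $s$.

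Your coinduction/Frobenius route, however, has a genuine gap. The claimed isomorphism $\mathrm{Hom}_V(A,\mathrm{Pol}(V))\cong\mathrm{Hom}_{\R}(A,\R)$ fails for arbitrary $V$-modules: $\mathrm{Pol}(V)$ is only a proper submodule of the true coinduced module (all functions on $V$), and the would-be inverse $\ell\mapsto\bigl(a\mapsto(x\mapsto\ell(\tau(-x)a))\bigr)$ need not land in polynomials. With $V=\R$ acting on $A=\R$ by $\tau(t)\cdot r=e^{t}r$ one gets $x\mapsto e^{-x}\ell(1)$. So $\mathrm{Pol}(V)$, and hence $F^n$, is \emph{not} injective in the category of all $V$-modules.

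You are right to flag the polynomiality issue, but your justification (that $B$ sits inside some $F^m$) is not among the hypotheses. In fact the paper's own formula has the same unspoken requirement: for $w(b)$ to lie in $F^n$ one needs $v_1\mapsto\tau(-v_1)b$ to depend polynomially on $v_1$. The lemma as literally stated is therefore too strong; it holds under a polynomiality assumption on the $V$-action on $B$, which is satisfied in the only place the lemma is used ($B=F^{n+1}$). The paper verifies only the equivariance and does not comment on this point.
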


The formula for a map $w$ (depending on a choice of retraction) is given below in the proof:

\begin{proof}
Denote by $\tau$ the action of $V$ on $B$.
Let $s$ be a linear map from $B$ to $A$ such that $sv={\rm Id}$.
Let $b\in B$: we define the map $w$ (depending on our choice of linear retraction $s$) by
$$w(b)(v_1,v_2,\ldots,v_n)=u(s\tau(-v_1)b)(0,v_2-v_1,\ldots, v_{n}-v_1).$$
We verify that $b$ satisfy the wanted conditions.
 The crucial point is that  the map $w$ is a map of $V$-modules, as we now show.  Indeed
 $$w(\tau(v_0)b)(v_1,v_2,\ldots,v_n)=u(s(\tau(-v_1)\tau(v_0)b))(0,v_2-v_1,\ldots,v_n-v_1)$$
$$=
u(s(\tau(-v_1+v_0)b))(0,v_2-v_1,\ldots,v_n-v_1)$$
  while
$$(\tau(v_0)w(b)) (v_1,v_2,\ldots,v_n)=w(b)(v_1-v_0,v_2-v_0,\ldots, v_n-v_0)$$
  $$=u(s\tau(v_0-v_1)b)(0,v_2-v_1,\ldots, v_n-v_1).$$

\end{proof}

We now apply this lemma to define $G$ inductively.
  Consider the injective map deduced from $\delta$ from $F^{n}/\delta(F^{n-1})$ to $F^{n+1}$.

  Recall our linear map $s:F^{n+1}\to F^n$ given by Equation  (\ref{ret}).
   We may take as linear inverse (that we still call $s$) the map $s: F^{n+1}\to F^n$  followed by the projection $F^n\to F^{n}/\delta(F^{n-1})$.

We define $G^1=0$ and inductively $G^{n+1}$ as  the map extending
$$H^n-\delta G^n: F^{n}\to F^n$$ to $F^{n+1}$ constructed in Lemma \ref{injective}.  Indeed
$(H^n-\delta G^n)\delta=\delta H^{n-1}-\delta (-\delta G^{n-1}+H^{n-1})=0$ so that the map
$H^n-\delta G^n$ produces a map from $F^n/\delta (F^{n-1})\to F^{n}$ and we use the fact that
 $F^n/\delta (F^{n-1})$ is embedded in $F^{n+1}$  via $\delta$ with inverse $s$.

More precisely, given $v_1$ and $f\in F^{n+1}$, we define the function $\phi$ of $n$ variables given by
$$\phi(w_1,w_2,\ldots, w_n)=f(v_1,v_1+w_1,\ldots, v_1+w_n)$$

and define

$$(G^{n+1}f)(v_1,v_2,\ldots, v_n)=((H^n-\delta G^n)\phi)(0,v_2-v_1,\ldots, v_n-v_1).$$

For example, this leads to the following formulae  for the first elements $G^i$.

We have $G^1=0, G^2=0$.

 $$(G^3f)(v_1,v_2)=f(v_1,v_1,v_2)-\int_0^1 \deps f(v_1,v_1+t(v_2-v_1),v_1+t(v_2-v_1)+\epsilon(v_2-v_1))dt.$$

$$(G^4f)(v_1,v_2,v_3)= G^4_0+G^4_1+G^4_2$$
with
$$(G_0^4f)(v_1,v_2,v_3)=
f(v_1,v_1,v_2,v_3)-f(v_1,v_2,v_2,v_3)+f(v_1,v_1,v_1,v_3)-f(v_1,v_1,v_1,v_2),$$

$$(G_1^4f)(v_1,v_2,v_3)=\int_{t=0}^1 \deps f(v_1,v_2,v_2+t(v_3-v_2), v_2+(t+\epsilon)(v_3-v_2))$$
$$-\int_{t=0}^1 \deps f(v_1,v_1,v_1+t(v_3-v_1), v_1+(t+\epsilon)(v_3-v_1))$$
$$+
\int_{t=0}^1 \deps f(v_1,v_1,v_1+t(v_2-v_1), v_1+(t+\epsilon)(v_2-v_1)).
$$
$$(G_2^4f)(v_1,v_2,v_3)=-\int_{t\in S_2} \deps f(v_1,V(t),V(t+\epsilon_1), V(t+\epsilon_2))$$
$$-\int_{t\in \Delta_2} \deps
f(v_1,V(t),V(t+\epsilon_2), V(t+\epsilon_1)).$$
Here $t=[t_1,t_2]$,
$t+\epsilon_1=[t_1+\epsilon_1,t_2]$, $t+\epsilon_2=[t_1,t_2+\epsilon_2]$,
 $V(t)=v_1+t_1(v_2-v_1)+t_2(v_3-v_1)$,
 and $\Delta_2:=\{[t_1,t_2], t_1\geq 0, t_2\geq 0; t_1+t_2\leq 1\}$.

\bigskip

Let us now consider the action of $V$ by translations on the complex $F^n$.  The differential $\delta$ induces  a differential $\delta_A:F^n_A\to F^n_A$ on the subspaces of invariants.
We identify the space $F^{n}_A$ with  $F^{n-1}$ by the map
$$R: F^{n-1}\to F^n_A$$ given by

$$(Rf)(v_1,v_2,\ldots, v_n)=f(v_2-v_1,v_3-v_2,\ldots, v_n-v_{n-1}).$$

Then the differential $\delta_A$  induced by $\delta$ becomes the Eilenberg-MacLane differential

$$(\delta_A f)(v_1,v_2,\ldots, v_{n-1})$$
$$=
f(v_2,v_3,\ldots, v_{n-1})-f(v_1+v_2,v_3,v_4,\ldots,v_{n-1})+f(v_1,v_2+v_3,\ldots,v_{n-1})+\cdots$$
$$+(-1)^{n-2}f(v_1,v_2,\ldots,v_{n-2}+v_{n-1})+(-1)^{n-1}f(v_1,v_2,\ldots, v_{n-1}).$$

The map $P:F^n\to F^n$ also commutes  with translations.

\begin{lemma}
We have $PR=R {\rm Ant}$
where  ${\rm Ant}$  is the anti-symmetrization operator of $F^{n-1}$ on the space of $\Lambda^{n-1} V^*$ of  antisymmetric functions  $f(v_1,v_2,\ldots, v_{n-1})$.

\end{lemma}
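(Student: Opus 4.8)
The plan is to compute $PR = \int_S \omega(Rf)$ directly by tracing the definitions, and to show that for a translation-invariant function coming from $Rf$, the differential form $\omega(Rf)$ is constant (translation invariant) with value exactly the antisymmetrization of $f$. The point is that $R$ lands in $F^n_A$, so $\omega(Rf)(v)$ does not depend on $v$; evaluating it at $v=0$ and integrating the resulting constant form over the simplex $S_{v_1,\dots,v_n}$ — whose Euclidean volume in the appropriate sense contributes a factor cancelling the $(n-1)!$ from the sum over $\Sigma_{n-1}$ — should reproduce $R(\mathrm{Ant}\,f)$.

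First I would write out $\omega(Rf)(v)$ using the defining formula for $\omega$: for tangent vectors $u_1,\dots,u_{n-1}$ at $v$,
\[
\langle \omega(Rf)(v), u_1\wedge\cdots\wedge u_{n-1}\rangle
=\sum_{\sigma\in\Sigma_{n-1}}\epsilon(\sigma)\,\deps (Rf)(v,\,v+\epsilon_1 u_{\sigma(1)},\,\ldots,\,v+\epsilon_{n-1}u_{\sigma(n-1)}).
\]
Now substitute the definition of $R$: $(Rf)(w_1,\dots,w_n)=f(w_2-w_1,\dots,w_n-w_{n-1})$. With $w_1=v$ and $w_{i+1}=v+\epsilon_i u_{\sigma(i)}$ one gets consecutive differences $w_2-w_1=\epsilon_1 u_{\sigma(1)}$, and $w_{i+1}-w_i=\epsilon_i u_{\sigma(i)}-\epsilon_{i-1}u_{\sigma(i-1)}$ for $i\ge 2$. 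So
\[
(Rf)(w)=f\bigl(\epsilon_1 u_{\sigma(1)},\ \epsilon_2 u_{\sigma(2)}-\epsilon_1 u_{\sigma(1)},\ \ldots,\ \epsilon_{n-1}u_{\sigma(n-1)}-\epsilon_{n-2}u_{\sigma(n-2)}\bigr),
\]
which is manifestly independent of $v$ — confirming $\omega(Rf)$ is translation invariant. Taking $\deps$ picks out the coefficient of $\epsilon_1\cdots\epsilon_{n-1}$; since $f$ is a polynomial function of its arguments, this coefficient is the value of the top-degree multilinear component (the "leading term") of $f$ evaluated on the appropriate combination. The arguments of $f$ span, up to sign, the vectors $\epsilon_1 u_{\sigma(1)},\dots,\epsilon_{n-1}u_{\sigma(n-1)}$ in "triangular" combinations; expanding multilinearly and extracting the $\epsilon_1\cdots\epsilon_{n-1}$ coefficient, the triangular change of variables has determinant $1$, so one is left with $\sum_{\sigma}\epsilon(\sigma)\,\ell(u_{\sigma(1)},\dots,u_{\sigma(n-1)})$ where $\ell$ is the multilinear leading part of $f$. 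That sum is precisely $(n-1)!$ times the antisymmetrization, i.e. it equals $\langle$ the constant form $R(\mathrm{Ant}\,f)$'s value $\rangle$ up to the volume normalization.

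Then I would integrate: since $\omega(Rf)$ is a constant (translation-invariant) $(n-1)$-form, $\int_{S_{v_1,\dots,v_n}}\omega(Rf)$ equals that constant form paired with the simplex, which by the elementary simplex-volume computation gives $\tfrac{1}{(n-1)!}$ times the pairing with the parallelepiped spanned by $v_2-v_1,\dots,v_n-v_1$. Combined with the $(n-1)!$ from the symmetric sum, the factorials cancel and one obtains exactly $(R(\mathrm{Ant}\,f))(v_1,\dots,v_n)$, where we must check that $\mathrm{Ant}$ applied to $f$ — which only sees the multilinear leading part — agrees with the stated antisymmetrization operator landing in $\Lambda^{n-1}V^*$. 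The main obstacle, I expect, is the bookkeeping in the second step: carefully matching the triangular combinations $\epsilon_i u_{\sigma(i)}-\epsilon_{i-1}u_{\sigma(i-1)}$ against the slots of $f$, justifying that only the multilinear top component survives the $\deps$ operator (a general polynomial $f$ contributes nothing else to the coefficient of $\epsilon_1\cdots\epsilon_{n-1}$ because each $\epsilon_i$ must appear exactly once), and verifying the unimodularity of the change of variables so that no spurious constant appears. Once that combinatorial identity is pinned down, the translation-invariance and the simplex-volume normalization are routine, and the identity $PR=R\,\mathrm{Ant}$ follows.
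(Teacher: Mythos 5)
Your route is essentially the paper's: evaluate the universal formula for $P$ on the translation-invariant function $Rf$, observe that the integrand no longer depends on the base point (hence on $t$), so the integral over the simplex contributes the volume $1/(n-1)!$ against the $(n-1)!$ terms of the signed sum, and what survives is the antisymmetrized multilinear part of $f$. The only methodological difference is one of bookkeeping: the paper first uses translation invariance to put the arguments in the form $f(0,\epsilon_1(v_{\sigma(2)}-v_1),\ldots,\epsilon_{n-1}(v_{\sigma(n)}-v_1))$, where each $\epsilon_i$ occurs in exactly one argument slot, whereas you unwind $R$ into consecutive differences $\epsilon_i u_{\sigma(i)}-\epsilon_{i-1}u_{\sigma(i-1)}$, where each $\epsilon_i$ occurs in \emph{two} slots. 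That choice is precisely what invalidates your justification of the key step.

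Concretely, your parenthetical claim that only the multilinear component of $f$ can contribute to the coefficient of $\epsilon_1\cdots\epsilon_{n-1}$ ``because each $\epsilon_i$ must appear exactly once'' is false termwise. Take $n=3$ and $f(w_1,w_2)=q(w_2)$ with $q$ a quadratic form with polarization $B$. Then $f(\epsilon_1u_{\sigma(1)},\ \epsilon_2u_{\sigma(2)}-\epsilon_1u_{\sigma(1)})=q(\epsilon_2u_{\sigma(2)}-\epsilon_1u_{\sigma(1)})$ has $\epsilon_1\epsilon_2$-coefficient $-2B(u_{\sigma(1)},u_{\sigma(2)})$, even though the multilinear part of $f$ is zero: a single slot has supplied both $\epsilon_1$ and $\epsilon_2$. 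The statement you need becomes true only after the signed sum over $\sigma$: a slot used to degree $\geq 3$ cannot yield a squarefree monomial (it involves only two of the $\epsilon_j$), and a slot used to degree $2$ produces a factor $B_i(u_{\sigma(i-1)},u_{\sigma(i)})$, symmetric in $u_{\sigma(i-1)},u_{\sigma(i)}$, while the remaining slots are then forced to avoid these two vectors; such terms therefore cancel in pairs under $\sum_\sigma\epsilon(\sigma)$. So the gap is repairable, but the repair uses the antisymmetrization over $\sigma$, not the degree count you invoke. The cleanest fix is to reorder your steps as the paper does: set $h(w_1,\ldots,w_{n-1}):=(Rf)(0,w_1,\ldots,w_{n-1})$, note that the integrand is $\sum_\sigma\epsilon(\sigma)\,\deps\, h(\epsilon_1u_{\sigma(1)},\ldots,\epsilon_{n-1}u_{\sigma(n-1)})$ where your degree-counting argument genuinely applies, and then conclude with the single remaining fact that antisymmetrization is unchanged under the unitriangular substitution relating $h$ to $f$ (equivalently, that an alternating form takes the same value on $(v_2-v_1,v_3-v_2,\ldots)$ and on $(v_2-v_1,\ldots,v_n-v_1)$) --- a point you also need, and only gesture at, in your final normalization step.
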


\begin{proof}

To compute $P$, we have to compute

 $$v(t)=v_1+t_1(v_2-v_1)+\cdots+t_{n-1} (v_{n-1}-v_1)$$

           and

$$f(t,v_1,v_2,\ldots, v_{n-1})$$
$$=\deps \sum_{\sigma\in \Sigma[2,\ldots,n-1]}\epsilon(\sigma) f(v(t),v(t)+\epsilon_1 (v_{\sigma(2)}-v_1),\ldots, v(t)+\epsilon_{n-2}(v_{\sigma(n-1)}-v_1)).$$
Now, if $f$ is invariant by translation, we see that
 $$f(t,v_1,v_2,\ldots, v_{n-1})=\deps \sum_{\sigma\in \Sigma[2,\ldots,n-1]}\epsilon(\sigma) f(0,\epsilon_1 (v_{\sigma(2)}-v_1),\ldots,\epsilon_{n-2}(v_{\sigma(n-1)}-v_1)).$$
 We obtain the lemma.

 \end{proof}

The homotopy $G$ commutes with  translations and gives an operator $G_A$ on the complex of invariants. It follows that we obtain on the complex $\delta_A$ the relation
$$G_A\delta_A+\delta_A G_A={\rm Id}-{\rm Ant}.$$

We thus obtain that the  cohomology of the operator $\delta_A$ is isomorphic in degree $n$
to $\Lambda^{n-1}V^*$.

\section{Free variables}

Let $T_n$ be the free associative algebra in $n$ variables.
We consider $L_n\subset T_n$ as the free Lie algebra in $n$ variables.
An element $f$ of $T_n$ is written as $f(x_1,x_2,\ldots, x_n)$.

Define
$$(\delta_nf)(x_1, \ldots , x_{n+1})=\sum_{i=1}^n(-1)^{i}f(x_1, x_2, \ldots, x_{i-1},{\hat x_i}, x_{i+1},\ldots, x_n).$$

Consider $T_n(y)$ the free associative algebra  generated by $(x_1,x_2,\ldots,x_n,y)$.
An operator $h$ on $T_n$ is extended by an operator still denoted by $h$ on $T_n(y)$ where we do not operate on $y$.

We may consider the application $\tau: T_n\to T_{n}(y)$ defined by
$$(\tau_nf)(x_1, \ldots ,x_n)=f(x_1+y, x_2+y, \ldots,  x_i+ y,\ldots, x_n+y).$$

The application $\tau$ commutes with $\delta$.
Thus the kernel of $\tau$ is a subcomplex of $T_n$.
We may identify it with $T_{n-1}$ by
$(Rf)(x_1,x_2,\ldots,x_n)=f(x_2-x_1,x_3-x_2,\ldots, x_{n}-x_{n-1})$
and we obtain on $T_n$  the complex $\delta_A$ considered by Alekseev-Torossian.
Here

$$(\delta_A f)(x_1,x_2,\ldots, x_{n-1})$$
$$=
f(x_2,x_3,\ldots, x_{n-1})-f(x_1+x_2,x_3,x_4,\ldots,x_{n-1})+f(x_1,x_2+x_3,\ldots,x_{n-1})+\cdots$$
$$+(-1)^{n-2}f(x_1,x_2,\ldots,x_{n-2}+x_{n-1})+(-1)^{n-1}f(x_1,x_2,\ldots, x_{n-1}).$$

It is clear that the complex $\delta: 0\to T_0\to T_1\to T_2\cdots$ is acyclic.
Indeed we can define
$$(sf)(x_1,x_2,\ldots, x_n)=f(0,x_1,x_2,\ldots, x_n)$$
and it is immediate to verify that
$$s\delta+\delta s={\rm Id}.$$

If $f\in T_n$, we define a function $f(t,x)\in \R[t]\otimes T_k$
by the same formula as Formula (\ref{eqf}):

\begin{definition}
Let $$x(t)=x_1+t_1(x_2-x_1)+\cdots+t_{n-1} (x_{n}-x_1).$$
 Define
$$f(t,x_1,x_2,\ldots, x_{n})=\deps \sum_{\sigma\in \Sigma([2,\ldots, n])}\epsilon(\sigma) f(x(t),x(t)+\epsilon_1 (x_{\sigma(2)}-x_1),\ldots, x(t)+\epsilon_{n-1}(x_{\sigma(n)}-x_1)).$$

Define
$$(P_nf)(x_1,x_2,\ldots,x_n)=\int_{\Delta_{n-1}} f(t,x_1,x_2,\ldots, x_{n}) dt_1dt_2\cdots dt_{n-1}.$$

\end{definition}

The following lemma is immediate.
                \begin{lemma}
We  have $\delta P_n=P_n\delta.$

\end{lemma}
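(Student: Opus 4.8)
The plan is to reduce the identity $\delta P_n = P_n \delta$ to a purely formal computation in the free associative algebra, exploiting the fact that the formula defining $f(t,x)$ and $P_n$ is built entirely from linear substitutions of the variables, so there is nothing ``universal'' to lose when passing from polynomial functions on $V$ to free variables. Concretely, I would first observe that the map $P_n\colon T_n \to T_n$ is, for each fixed $t\in\Delta_{n-1}$, a finite linear combination of algebra maps $T_n\to T_n$ of the form $x_i \mapsto (\text{affine expression in the }x_j)$ together with the extraction-of-$\epsilon_1\cdots\epsilon_{n-1}$-coefficient operation $\deps$; since these are linear in $f$ and the integration over $\Delta_{n-1}$ is also linear, $P_n$ is a well-defined linear operator $T_n\to T_n$, and it suffices to check the commutation before integrating, i.e. to prove the identity $(\delta_{n} f)(t', x_1,\dots,x_{n+1}) = (\delta_n g)(x_1,\dots,x_{n+1})$ at the level of the $t$-dependent integrands, where $g(x) := f(t,x)$, after matching up the simplices $\Delta_{n-1}$ and $\Delta_n$ appropriately.

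The key steps, in order, are: (i) spell out $(P_n\delta f)(x_1,\dots,x_{n+1})$ by substituting the explicit formula for $\delta f$ into Definition~\ref{eqf}-style expression with $n$ replaced by $n+1$, obtaining an integral over $\Delta_n$ of a sum over $\Sigma([2,\dots,n+1])$ of $\deps$-coefficients of $\delta f$ evaluated at the points $x(t), x(t)+\epsilon_i(x_{\sigma(i+1)}-x_1)$; (ii) spell out $(\delta P_n f)(x_1,\dots,x_{n+1}) = \sum_{i=1}^{n+1}(-1)^i (P_n f)(x_1,\dots,\hat x_i,\dots,x_{n+1})$, each term of which is an integral over $\Delta_{n-1}$ (in the remaining $n-1$ variables) of a $\deps$-sum; (iii) recognize that both sides are, term by term, obtained from the SAME underlying formal expression in $f$ — the only content of the lemma is that ``form a simplex, integrate the pulled-back form, apply $d$'' and ``apply $\delta$ then integrate'' give the same answer, which in the $F^n$ setting was justified by Stokes' theorem via the maps $\Omega^{n-1}\to F^n$; here one instead verifies it directly by a change-of-variables / face-decomposition argument on the standard simplices, matching the $i$-th face of $\Delta_n$ with $\Delta_{n-1}$ under the substitution dropping $x_i$. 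The cleanest route is to note that the polynomial-function case already gives the identity $\delta P_n = P_n\delta$ as an identity of polynomial differential operators with constant (rational) coefficients acting on arrows $f\mapsto f(\text{affine stuff})$, and that any such identity, being a finite collection of equalities between rational linear combinations of composites of affine substitutions and $\deps$-extractions, holds verbatim in any commutative-coefficient algebra — in particular in $\R[t]\otimes T_n$ — because it is an identity in the free object on which both sides are defined.

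The honest way to phrase the argument is therefore: the commutation $\delta P = P\delta$ was established in Section~1 for $F^n = \{$polynomial functions on $V^{\oplus n}\}$ as a consequence of Stokes' formula, but inspecting that proof shows it never used commutativity of multiplication of function \emph{values} — it only used that $\delta$, the affine substitutions, $\deps$ and $\int_{\Delta}$ are all $\R$-linear and that the relevant Stokes/face identities among simplices are combinatorial. One then transcribes that proof replacing ``polynomial function $f(v_1,\dots,v_n)$'' by ``element $f(x_1,\dots,x_n)$ of $T_n$'' throughout; every displayed equation remains valid because it is a universal polynomial identity. I would present this as a short ``the proof is identical to the one in Section~1, \emph{mutatis mutandis}, since that argument is purely formal in $f$'' followed by a one-line indication of where commutativity was (not) used.

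The main obstacle I anticipate is making the ``purely formal, hence transfers'' claim rigorous rather than hand-wavy: one must be slightly careful that the Stokes-formula step — which is genuinely analytic for differential forms — gets replaced by a finite combinatorial identity about faces of $\Delta_n$ (the boundary $\partial\Delta_n = \sum (-1)^i (\text{face}_i)$ matching the alternating sum defining $\delta$), and that this combinatorial identity is what is really being used. Equivalently one must set up, for free variables, the analogue of the pairing $\langle\omega(f)(v),\,v_1\wedge\cdots\wedge v_{n-1}\rangle$ as a $T_n$-valued ``form'' and re-derive ``$\int_{\partial S}\omega(f) = \int_S d\omega(f)$'' as an identity of $T_n$-valued integrals, which is legitimate because integration of $T_n$-valued polynomial forms over a simplex is just coordinatewise integration and Stokes holds coordinatewise. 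Once that is granted the lemma is immediate, which is presumably why the author calls it ``immediate.''
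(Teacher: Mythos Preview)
Your proposal is correct, and it matches the paper's approach in the only sense possible: the paper offers no proof at all beyond the word ``immediate,'' so you are supplying precisely the justification the author leaves implicit, namely that the Section~1 argument (Stokes plus the fact that $f\mapsto\omega(f)$ commutes with the differentials) is formal in $f$ and therefore transfers verbatim once one observes that Stokes' theorem for $T_n$-valued polynomial forms on $\Delta_{n-1}$ holds coordinatewise. Your final paragraph identifying this coordinatewise-Stokes step as the only genuinely analytic ingredient, and noting that it survives the passage to noncommutative coefficients, is exactly the point that makes the lemma ``immediate.''
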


We define $G^1=0$ and inductively $G^{n+1}$ by the same formula as  before. More precisely, given  $f\in T^{n+1}$, we define the function $\phi$ of $T^n(x_1)$ given by
$$\phi(w_1,w_2,\ldots, w_n)=f(x_1,x_1+w_1,\ldots, x_1+w_n)$$

and define

$$(G^{n+1}f)(x_1,x_2,\ldots, x_n)=((H^n-\delta G^n)\phi)(0,x_2-x_1,\ldots, x_n-x_1).$$

  Then we conclude as before that
  $G\delta+\delta G={\rm Id}-P$.
  Restricting to the invariants,
  we obtain a map
  $G_A$ such that ${\rm Id}-{\rm Ant}=G_A\delta_A+G_A\delta_A$.
Here ${\rm Ant}$ is the anti-symmetrization operator
$\sum_{\sigma}\epsilon(\sigma) x_{\sigma(1)}\cdots x_{\sigma(n)}.$

The subspace $L_n$ of $T_n$ is stable under the differential.
The operator  ${\rm Ant}$ is equal to $0$ on $L_n$, except in degree $1,2$, as there are no totally antisymmetric elements in $L_n$ for $n\geq 3$.
Thus we obtain

\begin{theorem}

$\bullet$
The cohomology groups  $H^n(T_n,\delta_A)$ of  the complex $\delta_A:T_n\to T_n$ are  of dimension $1$ and are generated by
$\sum_{\sigma}\epsilon(\sigma)
                                x_{\sigma(1)}\cdots x_{\sigma(n)}.$

$\bullet$
The cohomology groups
$H^n(L_n,\delta_A)$
 of  the complex $\delta_A:L_n\to L_n$ are of dimension $0$ if $n>2$.
 For $n=1,2$, $$H^1(L_1,\delta_A)=\R x_1,\hspace{3cm} H^2(L_2,\delta_A)=\R [x_1,x_2].$$
\end{theorem}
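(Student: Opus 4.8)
The plan is to extract everything from the chain-homotopy identity $G_A\delta_A+\delta_A G_A={\rm Id}-{\rm Ant}$ on the Alekseev--Torossian complex $(T_\bullet,\delta_A)$, together with two elementary facts about ${\rm Ant}$. First, in degree $n$ the image of ${\rm Ant}$ is the line $\R\,\omega_n$ spanned by $\omega_n:=\sum_{\sigma\in\Sigma_n}\epsilon(\sigma)\,x_{\sigma(1)}\cdots x_{\sigma(n)}$: indeed ${\rm Ant}$ sends $T_n$ onto the totally antisymmetric part of its multilinear (multidegree $(1,\dots,1)$) component, and that component is the regular representation of $\Sigma_n$, whose sign-isotypic part is one-dimensional. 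Second, $\omega_n$ is a $\delta_A$-cocycle for every $n\ge1$; I would verify this by a direct telescoping computation, writing $\delta_A=\sum_i(-1)^i d_i$ as the alternating sum of its face maps and observing that each $d_i\omega_n$ is $\pm$ an ``antisymmetrizer word'' over a set of $n$ of the $n+1$ variables, so that the terms cancel in pairs. (Alternatively one can use that ${\rm Ant}$ commutes with $\delta_A$, being the transport through $R$ of the operator $P$ of the first section, which commutes with $\delta$.)

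For the first bullet: if $z\in T_n$ is a cocycle, then $z-{\rm Ant}\,z=\delta_A(G_A z)$ is a coboundary, so $H^n(T_n,\delta_A)$ is spanned by the class of ${\rm Ant}\,z\in\R\,\omega_n$ and is therefore at most one-dimensional. To see it is exactly one-dimensional, suppose $\omega_n=\delta_A g$ for some $g\in T_{n-1}$. Applying ${\rm Ant}$ and using ${\rm Ant}\,\delta_A=\delta_A\,{\rm Ant}$, the fact that ${\rm Ant}\,g$ lies in $\R\,\omega_{n-1}$, and the cocycle relation $\delta_A\omega_{n-1}=0$, we get ${\rm Ant}\,\omega_n=\delta_A({\rm Ant}\,g)=0$; but ${\rm Ant}\,\omega_n$ is a nonzero multiple of $\omega_n$ (as $\omega_n$ is already multilinear and totally antisymmetric), a contradiction. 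Hence $H^n(T_n,\delta_A)=\R[\omega_n]$ is one-dimensional, generated by $\sum_\sigma\epsilon(\sigma)x_{\sigma(1)}\cdots x_{\sigma(n)}$.

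For the second bullet I would first check that the operators $P_n$ and $G_A$ preserve the subcomplex $L_\bullet$. This is because every ingredient of their construction — substitution of degree-one elements for the variables, the associative product, the coefficient-extraction $\deps$, integration over $\Delta_{n-1}$, and the retraction $s$ — carries Lie elements to Lie elements; $\delta_A$ already preserves $L_\bullet$ by the remark preceding the theorem. Restricting the homotopy identity to $L_\bullet$ we thus get $G_A\delta_A+\delta_A G_A={\rm Id}-{\rm Ant}$ on $(L_\bullet,\delta_A)$. Now for $n\ge3$ the representation $\mathrm{Lie}(n)$ of $\Sigma_n$ contains no copy of the sign representation (equivalently, $L_n$ has no nonzero totally antisymmetric multilinear element, as already noted), so ${\rm Ant}$ vanishes on $L_n$; hence any cocycle $z\in L_n$ satisfies $z=\delta_A(G_A z)$ with $G_A z\in L_{n-1}$, and $H^n(L_n,\delta_A)=0$. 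For $n=1$, $L_1=\R x_1$, $\delta_A x_1=0$, and there is nothing in lower degree, so $H^1(L_1,\delta_A)=\R x_1$. For $n=2$, $\delta_A$ vanishes on $L_1=\R x_1$, hence $H^2(L_2,\delta_A)=\ker(\delta_A|_{L_2})$; if $z\in L_2$ is a cocycle then, since $G_A z\in L_1=\R x_1$ and $\delta_A$ kills $L_1$, the homotopy gives $z={\rm Ant}\,z\in\R\,\omega_2=\R[x_1,x_2]$, and since $\omega_2=[x_1,x_2]$ is itself a cocycle we conclude $H^2(L_2,\delta_A)=\R[x_1,x_2]$.

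The only points I expect to require real care are bookkeeping ones: fixing the signs in the telescoping proof that $\delta_A\omega_n=0$, and verifying from the explicit formulas that $G^{n+1}$ — and hence $G_A$ — indeed maps Lie elements to Lie elements. The single external input is the classical vanishing of the multiplicity of the sign representation in $\mathrm{Lie}(n)$ for $n\ge3$.
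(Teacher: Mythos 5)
Your proof is correct and follows essentially the paper's own route: everything is extracted from the homotopy identity $G_A\delta_A+\delta_A G_A={\rm Id}-{\rm Ant}$, the one-dimensionality of the sign-isotypic part of the multilinear component of $T_n$, and its vanishing on $L_n$ for $n\ge 3$; you also correctly supply details the paper leaves implicit (that $\delta_A\omega_n=0$, that $\omega_n$ is not a coboundary, and, most importantly, that $G_A$ preserves the subcomplex $L_\bullet$, which is genuinely needed to conclude that a Lie cocycle is a Lie coboundary). One small slip in that last verification: the associative product is not actually an ingredient of $P_n$ or $G$ (and would not carry Lie elements to Lie elements) --- only substitution of degree-one elements into $f$, coefficient extraction, integration over the simplex, and linear retractions occur, and these do preserve $L_\bullet$.
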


Remark: The Guichardet construction also provides  an explicit homotopy.

\end{document}